\documentclass[12pt,a4paper]{article}
\usepackage[margin=25mm,headheight=15pt,headsep=8mm]{geometry}
\usepackage[T1]{fontenc}
\usepackage{lmodern}
\usepackage{microtype}

\usepackage{amsmath,amsthm,xcolor}

\usepackage{tikz}
\usetikzlibrary{calc}

\usepackage{hyperref}
\hypersetup{
    colorlinks=true,
    linkcolor=blue,
    citecolor=blue,
    urlcolor=blue
}

\usepackage{cleveref}

\usepackage{caption}
\numberwithin{equation}{section}

\newtheorem{theorem}{Theorem}[section]
\newtheorem{proposition}[theorem]{Proposition}
\newtheorem{corollary}[theorem]{Corollary}
\newtheorem{lemma}[theorem]{Lemma}
\theoremstyle{remark}

\DeclareMathOperator{\area}{area}
\DeclareMathOperator{\length}{length}

\title{\bfseries{A counterexample to Nagamochi's scoring lemma and a new rectangle packing bound}}
\author{
    Hakan Karakuş \\[2pt]
    \small Boğaziçi University, Istanbul, Türkiye \\
    \small\href{mailto:hakan.karakus@bogazici.edu.tr}{\nolinkurl{hakan.karakus@bogazici.edu.tr}}
}
\date{\small September 29, 2026}

\begin{document}

\maketitle

\begin{abstract}
    Let $s(N)$ denote the smallest side length of a square containing $N$ unit squares with arbitrary orientations and pairwise disjoint interiors. Nagamochi's \emph{Packing Unit Squares in a Rectangle} (2005) states a rectangle packing bound from which he deduces two infinite families of exact values: $s(k^2-1) = k$ and $s(k^2-2) = k$ for every integer $k \geq 2$. We construct a family of counterexamples, local to a corner of the container, to the scoring assertion in Nagamochi's Lemma~1. These counterexamples show that the published proof of the rectangle bound is incomplete, but do not disprove the bound itself. We then give an independent proof of a weaker rectangle bound using a strip measure. This recovers $s(k^2-1) = k$ for every integer $k \geq 2$ and yields an explicit lower bound for $s(N)$ that improves strictly on the area bound for every nonsquare integer $N \geq 8$. Our argument does not establish Nagamochi's full rectangle bound or the identity $s(k^2-2) = k$.
\end{abstract}

\noindent\textbf{Keywords.} Unit square packing; rectangle packing; unavoidable sets; weighted measures; packing bounds.

\section{Introduction}

Finite packing problems ask how small a container of a prescribed shape can be while accommodating a given number of congruent objects with disjoint interiors. Finding a dense arrangement provides an upper bound, but proving its optimality requires ruling out every better arrangement. For equal circles, non-overlap can be expressed through distances between centers; for squares, orientations introduce additional geometric constraints.

For a positive integer $N$, let $s(N)$ denote the smallest side length of a square containing $N$ unit squares with pairwise disjoint interiors, where the squares may be translated and rotated independently. Area and the usual grid packing give
\[
    \sqrt{N} \leq s(N) \leq \left\lceil\sqrt{N}\right\rceil.
\]
For $N = k^2$, the two bounds coincide, giving $s(k^2) = k$.

This paper revisits a rectangle-packing bound stated by Nagamochi \cite{Nagamochi}, from which the identities
\[
    s(k^2-2) = s(k^2-1) = k
\]
were deduced for every integer $k \geq 2$. We give a family of counterexamples to a scoring assertion used in the published proof. We then prove a weaker rectangle bound independently; it recovers $s(k^2-1)=k$ and yields an explicit lower bound for $s(N)$ that improves strictly on the area bound for every nonsquare integer $N \geq 8$.

The square-packing problem considered here goes back at least to Erd\H{o}s and Graham \cite{ErdosGraham}, who showed that rotated squares can substantially reduce the unused area in large containers compared with the usual grid construction. G\"obel \cite{Gobel} subsequently studied the problem systematically, and Friedman \cite{Friedman} surveys its history and known results. Friedman \cite[Section~1]{Friedman} also notes that computational methods developed for circle packing did not readily generalize to squares.

Among the classical exact results, G\"obel \cite{Gobel} obtained $s(5) = 2+\frac{1}{\sqrt{2}}$, Kearney and Shiu \cite{KearneyShiu} proved $s(6) = s(7) = 3$, and Stromquist \cite{Stromquist} proved $s(10) = 3+\frac{1}{\sqrt{2}}$. Friedman \cite{Friedman} proved or reproved several further individual cases, including $N = 8,14,15,24,$ and $35$. Computational methods have also improved constructions without establishing optimality; for example, Gensane and Ryckelynck \cite{GensaneRyckelynck} developed an inflation-based method that improved the best known packings for several values of $N$.

One particularly suggestive sequence occurs immediately below perfect squares. Bentz \cite{Bentz2010} proved $s(13) = 4$ and $s(46) = 7$, and later proved $s(22) = 5$ and $s(33) = 6$ \cite{Bentz}. Together with $s(6) = 3$, these results establish
\[
    s(k^2-3) = k,\qquad k = 3,4,5,6,7.
\]
These individual proofs do not establish the identity for all $k$. Results in the opposite direction show that the grid construction need not remain optimal further below a perfect square: Arslanov, Mustafin, and Shangitbayev \cite{Arslanov} proved
\[
    s(k^2-k) < k,\qquad k \geq 12.
\]
Thus even near perfect squares, determining when rotations permit a smaller container remains difficult.

A rectangle bound stated by Nagamochi \cite{Nagamochi} would provide two infinite families of exact values in this near-square regime. For $t \geq 2$, write
\[
    \Delta(t) = t+1-\lceil t\rceil.
\]
For positive real numbers $a$ and $b$, let $\nu(a,b)$ denote the maximum number of unit squares with pairwise disjoint interiors that can be packed in some $a'\times b'$ rectangle with $a' < a$ and $b' < b$. Nagamochi \cite[Theorem~1]{Nagamochi} states that, for real numbers $a,b \geq 2$,
\begin{equation}\label{eq:nagamochi-rectangle}
    \nu(a,b) < ab-\Delta(a)-\Delta(b).
\end{equation}
From this bound, Nagamochi \cite[Theorem~2]{Nagamochi} derives a general lower bound for $s(N)$ and, in particular,
\begin{equation}\label{eq:close-to-perfect-square}
    s(k^2-1) = s(k^2-2) = k,\qquad k \geq 2.
\end{equation}
These identities have subsequently been cited in the square-packing literature as established results; see, for example, \cite{Friedman,Bentz2010,Bentz,Arslanov}. The validity of Nagamochi's proof therefore matters beyond the individual cases.

Nagamochi's proof assigns scores to squares by means of a central area, four weighted line segments, and weighted points. Lemma~1 of \cite{Nagamochi} asserts a lower bound on the score of each square; summing these scores yields the rectangle bound \eqref{eq:nagamochi-rectangle}. These arguments belong to the method of unavoidable sets: one assigns resources to subsets of the container and shows that every packed square must consume a prescribed amount. Friedman \cite[Sections~4 and~5]{Friedman} surveys constructions using unavoidable points, while Bentz \cite[Section~2]{Bentz} develops continuously varying families of unavoidable point configurations.

We show that Nagamochi's scoring assertion is false under the definitions in \cite[Section~3]{Nagamochi}. More precisely, we identify an edge-incidence condition missing from the application of \cite[Lemma~6]{Nagamochi} in \cite[Section~5.5, Case~6]{Nagamochi}, and use it to construct a family of squares whose scores are less than $1$. The construction applies near a corner of every rectangle with side lengths $a > 3$ and $b > 2$ once its parameter is sufficiently small. A further shrink about a vertex produces a strict configuration in which the contact point lies on an edge different from the two edges cut by $y = 1$.

This counterexample shows that the published proof of \eqref{eq:nagamochi-rectangle} requires an additional argument, but it does not disprove the rectangle bound itself. We instead prove the following weaker bound by a separate method.
\begin{theorem}[Rectangle bound]\label{thm:rectangle-one-direction}
    For real numbers $a \geq 2$ and $b \geq 3$,
    \begin{equation}\label{eq:rectangle-one-direction}
        \nu(a,b) < ab-\Delta(a).
    \end{equation}
    If $a,b \geq 3$, applying the same construction in both directions gives
    \begin{equation}\label{eq:rectangle-symmetric}
        \nu(a,b) < ab-\max\{\Delta(a),\Delta(b)\}.
    \end{equation}
\end{theorem}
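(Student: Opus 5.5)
We work in an $a'\times b'$ rectangle $R=[0,a']\times[0,b']$ with $a'<a$ and $b'<b$. We will build a nonnegative measure $\mu$ on $R$, the \emph{strip measure}, with three properties: total mass at most $ab-\Delta(a)$ (strictly less, using $a'<a$, $b'<b$); every unit square $Q\subset R$ has $\mu(Q)\ge 1$; and the squares have disjoint interiors, so $\mu$ of their union is the sum of the $\mu(Q)$. Then $N\le\mu(R)<ab-\Delta(a)$. Boundaries carry no problem as long as $\mu$ has no atoms and its one-dimensional parts lie on segments. If some part sits on segments that a square's boundary might contain, we treat it with one-sided (half-open) conventions.

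Write $m=\lceil a\rceil-1$. The case $a$ an integer is handled the same way (there $\Delta(a)=1$). We cut $R$ by the vertical lines $x=1,2,\dots,m$, which lie inside $R$ since $a'\ge m$ may be assumed; if $a'<m$ the area bound already suffices. This produces $m$ full-width unit strips plus a leftover strip of width $a'-m<a-m=2-\Delta(a)$. The measure $\mu$ has two parts. The first is area measure on all of $R$, of mass $a'b'$. The second is length measure with weight $\lambda$ on a collection of vertical segments lying on the cut lines, chosen so that the total segment mass is at most $a'b'$ minus $(a'b'-(ab-\Delta(a)))$. A cleaner way to organize this is to replace area in the last strip by a weighted combination of segment length. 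The natural choice is to put length weight $w=a'-m$ on the line $x=m$ and no area on the leftover strip.

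This choice is justified by the following geometric fact: any unit square meeting the leftover strip either lies entirely inside $\{x\ge m\}$, which is impossible when $a'-m<1$ unless it is rotated and still fits, or crosses the line $x=m$ in a chord. Similarly, every unit square crossing a cut line $x=j$ meets it in a chord of length at least $1$, or else has small area on one side. The key lemma to prove is therefore:

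\emph{For a unit square $Q$ and a vertical line $\ell$, the quantity $\operatorname{area}(Q\cap\{x\le \ell\})+c\cdot\operatorname{length}(Q\cap\ell)$ is at least $1$ whenever $Q\cap\{x>\ell\}$ lies in a strip of width less than $1$.}

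This is checked by parametrizing $Q$ by its rotation angle $\theta\in[0,\pi/4]$ and its horizontal offset. The area cut off is piecewise quadratic and the chord length is piecewise linear, so each case reduces to a one-variable inequality. The condition $b\ge3$ enters because the measure budget uses $b'$ as the multiplier of the savings, and for small $b'$ there are too few full-height placements to make the accounting positive. More precisely, the leftover strip has area $(a'-m)b'$, which gets replaced by a segment mass of $w\cdot b'$, minus a saving of at least $\Delta(a)$ that comes from the top and bottom ends of the segment (we drop weight near $y\in[0,1)$ and $(b'-1,b']$, where no square can straddle the line with a long chord). Then $\mu(R)\le mb'+(\text{segment mass})<ab-\Delta(a)$.

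The main obstacle is the per-square inequality $\mu(Q)\ge1$ for tilted squares near the corners of $R$, where the segment weight has been removed. These squares are exactly the corner configurations that broke Nagamochi's Lemma~1, so this step needs a careful case split by which edges of $R$ the square touches. I would first try to show that a square touching both $y=0$ (or $y=b'$) and the line $x=m$ is forced to have enough area on the left side, using the height $b'\ge3$ to rule out the square reaching both ends.

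For \eqref{eq:rectangle-symmetric}, when $a,b\ge3$ we apply the construction with the roles of the axes exchanged whenever $\Delta(b)>\Delta(a)$, and take the better of the two bounds.
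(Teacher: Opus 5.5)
There is a genuine gap: the measure you describe never produces the saving $\Delta(a)$. Your overall framework is sound: a nonnegative measure that gives every square mass at least $1$, with total mass below $ab-\Delta(a)$.

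\emph{The proposed measure has no slack.} Area on $[0,m]\times[0,b']$ plus line density $w=a'-m$ on $\{m\}\times[0,b']$ has total mass $mb'+wb'=a'b'$, which is just the area bound. Your key lemma is also sharp. For the axis-parallel square $[a'-1,a']\times[y_0,y_0+1]$, the area left of $x=m$ is exactly $1-w$, and its chord is all of $\{m\}\times[y_0,y_0+1]$. So the line density must equal $w$ on essentially the whole segment.

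\emph{The proposed saving fails.} Dropping the segment weight for $y\in[0,1)$ and $y\in(b'-1,b']$ already fails for the corner square $[a'-1,a']\times[0,1]$, whose measure becomes $1-w<1$. Your assertion that no square straddles $x=m$ with a long chord near the ends of the segment is false. (Had it worked, it would save about $2w\to 2\Delta(a)$, which for $a=b$ would already match Nagamochi's disputed bound.) Likewise, your account of where $b\ge 3$ enters is not an argument.

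\emph{What is missing.} The saving has to come from the two boundary strips along the sides of length $a$, via rows of unavoidable points. The paper works in the full container $[0,a]\times[0,b]$ and uses:
\begin{itemize}
\item area density $1$ only on $H=[0,a]\times[1,b-1]$;
\item line density $1/2$ on $y=1$ and on $y=b-1$;
\item point mass $1/2$ at $(j,4/5)$ and $(j,b-4/5)$ for $j=1,\dots,\lceil a\rceil-1$.
\end{itemize}
The total is $a(b-2)+a+\lceil a\rceil-1=ab-\Delta(a)$. So it is the count of integers in $(0,a)$ that yields $\Delta(a)$.

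The per-square estimate for a square $S$ of side $1<\lambda\le 101/100$ goes as follows.
\begin{itemize}
\item \emph{Center at height at most $1$.} The open horizontal chord of $S$ at height $4/5$ has length greater than $1$, so $S^\circ$ captures a point of mass $1/2$. Its area above $y=1$ plus half its chord on $y=1$ is at least $\lambda^2-\lambda/2$. Together these give $\mu(S^\circ)\ge \lambda^2-\lambda/2+1/2>1$.
\item \emph{Center inside the strip.} Any triangular cap cut off by $y=1$ or $y=b-1$ has area at most $\lambda/(2\sqrt2)<1/2$ times its base, so the line mass compensates. A cut through two opposite sides has chord length at least $\lambda$ and removes at most half the area.
\end{itemize}
The hypothesis $b\ge 3$ guarantees that a square centered near the bottom cannot reach $y=b-1$. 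Strictness comes from scaling the packing by $\lambda>1$ into the full container and proving $\mu(S^\circ)>1$.

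Your treatment of the symmetric bound, by exchanging the axes, is the same as the paper's.
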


The proof constructs a finite nonnegative measure adapted to a horizontal strip, following Nagamochi's general framework of weighted areas, line segments, and points. Line mass on complete horizontal segments and point masses of $1/2$ compensate for area outside the strip. The construction and the required geometric estimates are given in \Cref{sec:replacement}.

Among the consequences in \Cref{sec:consequences} are an obstruction for integer rectangles and an explicit lower bound for $s(N)$ that improves strictly on the area bound for every nonsquare integer $N \geq 8$. In particular, the rectangle bound recovers one of the two near-square identities in \eqref{eq:close-to-perfect-square}.
\begin{corollary}\label[corollary]{cor:minus-one}
    For every integer $k \geq 2$, $s(k^2-1) = k$.
\end{corollary}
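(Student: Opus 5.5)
We deduce the corollary from \Cref{thm:rectangle-one-direction} by contradiction. The upper bound $s(k^2-1)\le k$ comes from the grid packing: in a $k\times k$ square, delete one cell of the $k^2$ grid cells. So the work is the lower bound. Suppose, for contradiction, that $s(k^2-1)<k$. Then there is a side length $s'<k$ and a packing of $k^2-1$ unit squares in an $s'\times s'$ square. (If $s(N)$ were not attained we could still pick some $s'$ strictly between $s(k^2-1)$ and $k$ admitting a packing.) By the definition of $\nu$, which counts packings in $a'\times b'$ rectangles with $a'<a$ and $b'<b$, this gives
\[
\nu(k,k)\ \ge\ k^2-1 .
\]

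For $k\ge 3$ we apply \eqref{eq:rectangle-one-direction} with $a=b=k$, which is allowed since $a\ge 2$ and $b\ge 3$. As $k$ is an integer, $\lceil k\rceil=k$, so $\Delta(k)=k+1-k=1$. Hence $\nu(k,k)<k^2-1$, contradicting the display above. Therefore $s(k^2-1)\ge k$, and so $s(k^2-1)=k$.

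The case $k=2$ is outside the theorem's hypothesis $b\ge 3$ and must be handled separately; it says $s(3)=2$. I would argue directly. Suppose three unit squares fit in a square of side $s<2$. Place a point at the centre of the container. A unit square has diameter $\sqrt2$, and the standard unavoidable-point argument for $s(2)=s(3)=2$ shows that every unit square in a container of side $<2$ must contain the centre in its closure. Two squares with disjoint interiors can share that point only on their boundaries, and a careful look shows at most two can meet there, not three. A cleaner route I would try first is to apply \eqref{eq:rectangle-one-direction} with $a=2$ and $b=3$. This gives $\nu(2,3)<6-1=5$, but that bound is too weak for $k=2$, so instead I would cite the classical $s(2)=s(3)=2$ (G\"obel \cite{Gobel}; Friedman \cite{Friedman}).

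The only delicate points are the strict inequalities in the definition of $\nu$ versus a closed container, and the base case $k=2$. The base case is the main obstacle, since the theorem as stated does not cover it and it therefore needs a separate, though classical, argument.
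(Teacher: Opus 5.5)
Your proposal is correct and follows the paper's argument. The grid gives $s(k^2-1)\le k$. For $k\ge 3$, the rectangle bound with $a=b=k$ and $\Delta(k)=1$ gives the lower bound; the paper packages this step as its integer-rectangle corollary. For $k=2$, both you and the paper cite the classical identity $s(3)=2$. Your side sketch of an unavoidable-point argument for $k=2$ is imprecise: in fact the centre lies in the interior of every unit square in a container of side less than $2$, so at most one square fits. The citation you fall back on is enough, so this does not affect the proof.
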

This identity is already stated in \cite[Theorem~2(i)]{Nagamochi}; our contribution is a proof for $k \geq 3$ independent of the failed scoring assertion. The case $k = 2$, namely $s(3) = 2$, is classical \cite[Theorem~1]{Friedman}. Our argument does not establish the second identity $s(k^2-2) = k$.

\section{Nagamochi's score}\label{sec:score}

We use the enlarged-square formulation given in \cite[Section 3, following Lemma 1]{Nagamochi}, and write $\sigma(S)$ for Nagamochi's original score. In this formulation, the container and weighted supports are unchanged, while the square being scored has side length $\lambda > 1$. Thus the area and line weights carry no additional factors of $\lambda$.

Let $R = [0,a]\times[0,b]$, where $a,b > 2$ are arbitrary real numbers. We use strict inequalities here so that the eight weighted points in $Q$ below are distinct. The construction in \cite[Section 3 and Figure 1]{Nagamochi} consists of a central rectangle
\[
    R^* = [1,a-1]\times[1,b-1],
\]
four line segments
\begin{equation}\label{eq:area-line-supports}
    \begin{aligned}
        L_1 &= [0.9,a-0.9]\times\{1\},& L_2 &= [0.9,a-0.9]\times\{b-1\},\\
        L_3 &= \{1\}\times[0.9,b-0.9],& L_4 &= \{a-1\}\times[0.9,b-0.9],
    \end{aligned}
\end{equation}
and two sets of weighted points:\footnote{For $P$, we follow \cite[Figure 1]{Nagamochi}, correcting the interchange of $a$ and $b$ in two coordinates of the printed definition.}
\begin{equation}\label{eq:weighted-point-supports}
    \begin{aligned}
        Q &= \{(0.9,j),(a-0.9,j):j \in \{1,b-1\}\}\\
          &\quad{}\cup\{(i,0.9),(i,b-0.9):i \in \{1,a-1\}\},\\[3pt]
        P &= \{(i,0.9),(i,b-0.9):i = 2,\ldots,\lceil a\rceil-2\}\\
          &\quad{}\cup\{(0.9,j),(a-0.9,j):j = 2,\ldots,\lceil b\rceil-2\}.
    \end{aligned}
\end{equation}
An index range is understood to be empty when its upper endpoint is less than $2$. We have $\#Q = 8$ and $\#P = 2\lceil a\rceil+2\lceil b\rceil-12$, where $\#$ denotes cardinality.

Assign area density $1$ to $R^*$, line density $1/2$ to each $L_j$, weight $9/20$ to each point of $Q$, and weight $1/2$ to each point of $P$. The score of a square $S \subseteq R$ is
\begin{equation}\label{eq:square-score}
    \sigma(S) = \area(S\cap R^*)+\frac{1}{2}\sum_{j = 1}^{4}\length(S\cap L_j)+\frac{9}{20}\#(S\cap Q)+\frac{1}{2}\#(S\cap P).
\end{equation}
Here $\length$ denotes Euclidean length. We also use the same score formula for Borel subsets of $R$; since all weights are nonnegative, the score is monotone under inclusion. The four line segments and the eight $Q$-points together contribute
\[
    \frac{1}{2}\left(2\left(a-\frac{9}{5}\right)+2\left(b-\frac{9}{5}\right)\right)+8\cdot\frac{9}{20} = a+b.
\]
Adding the central area and the $P$-point contributions, the total score available in the container is
\begin{equation}\label{eq:total-score}
    (a-2)(b-2)+(a+b)+(\lceil a\rceil+\lceil b\rceil-6) = ab-\Delta(a)-\Delta(b).
\end{equation}
\Cref{fig:nagamochi-construction} shows the full construction in a rectangle with symbolic side lengths $a$ and $b$.

\begin{figure}[!ht]
    \centering
    \begin{tikzpicture}[scale=1.1]
        \pgfmathsetmacro{\rectwidth}{7.4}
        \pgfmathsetmacro{\rectheight}{5.4}
        \fill[gray!10] (1, 1) rectangle (\rectwidth-1, \rectheight-1);
        \draw (-0.2, 0) -- (\rectwidth+0.2, 0);
        \draw (\rectwidth, -0.2) -- (\rectwidth, \rectheight+0.2);
        \draw (\rectwidth+0.2, \rectheight) -- (-0.2, \rectheight);
        \draw (0, \rectheight+0.2) -- (0, -0.2);
        \node[left] at (-0.2, \rectheight) {$b$};
        \node[below] at (\rectwidth, -0.2) {$a$};
        \node[below left] at (-0.2, -0.2) {$0$};
        \node at (\rectwidth/2, \rectheight/2) {$R^*$};
        \draw[ultra thick] (0.9, 1) -- (\rectwidth-0.9, 1);
        \node[above] at (\rectwidth/2, 1) {$L_1$};
        \draw[ultra thick] (0.9, \rectheight-1) -- (\rectwidth-0.9, \rectheight-1);
        \node[below] at (\rectwidth/2, \rectheight-1) {$L_2$};
        \draw[ultra thick] (1, 0.9) -- (1, \rectheight-0.9);
        \node[right] at (1, \rectheight/2) {$L_3$};
        \draw[ultra thick] (\rectwidth-1, 0.9) -- (\rectwidth-1, \rectheight-0.9);
        \node[left] at (\rectwidth-1, \rectheight/2) {$L_4$};
        \foreach \y in {1, \rectheight-1} {
            \filldraw[black] (0.9, \y) circle[radius=1pt];
            \node[left] at (0.9, \y) {$Q$};
            \filldraw[black] (\rectwidth-0.9, \y) circle[radius=1pt];
            \node[right] at (\rectwidth-0.9, \y) {$Q$};
        }
        \foreach \x in {1, \rectwidth-1} {
            \filldraw[black] (\x, 0.9) circle[radius=1pt];
            \node[below] at (\x, 0.9) {$Q$};
            \filldraw[black] (\x, \rectheight-0.9) circle[radius=1pt];
            \node[above] at (\x, \rectheight-0.9) {$Q$};
        }
        \foreach \x in {2, ..., 6} {
            \filldraw[black] (\x, 0.9) circle[radius=1pt];
            \node[below] at (\x, 0.9) {$P$};
            \filldraw[black] (\x, \rectheight-0.9) circle[radius=1pt];
            \node[above] at (\x, \rectheight-0.9) {$P$};
        }
        \foreach \y in {2, ..., 4} {
            \filldraw[black] (0.9, \y) circle[radius=1pt];
            \node[left] at (0.9, \y) {$P$};
            \filldraw[black] (\rectwidth-0.9, \y) circle[radius=1pt];
            \node[right] at (\rectwidth-0.9, \y) {$P$};
        }
    \end{tikzpicture}
    \caption{Nagamochi's scoring construction in a general rectangle, drawn schematically from \cite[Section 3, Figure 1]{Nagamochi}. The central region has area density $1$, the four segments have line density $1/2$, the $Q$-points have weight $9/20$ each, and the $P$-points have weight $1/2$ each.}
    \label{fig:nagamochi-construction}
\end{figure}

Lemma 1 of \cite{Nagamochi}, in the equivalent formulation just described, requires
\begin{equation}\label{eq:nagamochi-lemma1}
    \sigma(S) > 1\quad\text{ whenever }S \subseteq R\text{ has side length }1 < \lambda \leq \frac{101}{100}.
\end{equation}
This is the local assertion on which the published rectangle bound rests. Choose a packing of $M = \nu(a,b)$ unit squares in an $a'\times b'$ rectangle with $a' < a$ and $b' < b$, and uniformly enlarge it inside $R$. The enlarged squares can then be shrunk slightly about their centers, retaining a common side length $1 < \lambda \leq 101/100$ and making the closed squares pairwise disjoint. If \eqref{eq:nagamochi-lemma1} were valid, summing their scores would give
\[
    \nu(a,b) < \sum_{i = 1}^{M}\sigma(S_i) \leq ab-\Delta(a)-\Delta(b),
\]
by \eqref{eq:total-score}. This explains the intended deduction of \eqref{eq:nagamochi-rectangle} from the scoring assertion.

We next examine the missing edge-incidence condition in \Cref{sec:gap}. The counterexample in \Cref{sec:counterexample} then uses only a small neighborhood of one corner, where the other weighted supports contribute nothing.

\section{The missing edge-incidence condition}\label{sec:gap}

The issue occurs in \cite[Section 5.5, Case 6]{Nagamochi}. We consider $a > 3$ and $b > 2$, the range used for the counterexample below. In Case 6, the center of $S$ lies in $[1,a-1]\times[0,1]$, and the line $y = 1$ intersects two adjacent edges $e_1$ and $e_2$ of $S$. After symmetry and the preceding reductions, $S$ contains $(1,0.9)$, contains no point of $P$, and contains neither $(0.9,1)$ nor $(a-1,0.9)$. To estimate the score in this case, the argument considers a limiting contact configuration with one corner of $S$ on the $x$-axis and $(2,0.9)$ on an edge of $S$. It then handles the case $(1,1) \in S$ separately and invokes \cite[Lemma 6]{Nagamochi} when $(1,1) \notin S$.

However, \cite[Lemma 6]{Nagamochi} requires that the point $(2,0.9)$ lie on the designated edge $e_2$, one of the two adjacent edges intersected by the line $y = 1$, as shown in \cite[Figure 4(b)]{Nagamochi}. Contact with an arbitrary edge need not give this incidence. This suggests examining an almost axis-parallel square for which contact with $(2,0.9)$ occurs on a third edge, distinct from the two cut by $y = 1$, as shown in \Cref{fig:counterexample-idea}.

\begin{figure}[htbp]
    \centering
    \begin{tikzpicture}[scale=1.1]
        \coordinate (A) at (3.5, 0);
        \coordinate (B) at (3.9, 2.8);
        \coordinate (C) at (1.1, 3.2);
        \coordinate (D) at (0.7, 0.4);
        \draw[very thick, blue!55!black] (A) -- (B) -- (C) -- (D) -- cycle;
        \draw (0, 0) -- (5.2, 0) node[right] {$y = 0$};
        \draw (0, 3) -- (5.2, 3) node[right] {$y = 1$};
        \draw[dashed] (0, 2.5) -- (5.2, 2.5) node[right] {$y = 0.9$};
        \coordinate (E) at ($(A)!{25/28}!(B)$);
        \draw (E) -- (4.5, 1.7);
        \node[right] at (4.5, 1.7) {$(2, 0.9)$};
        \filldraw[black] (E) circle[radius=1.5pt];
    \end{tikzpicture}
    \caption{The counterexample idea, shown schematically with separations exaggerated. The line $y = 1$ cuts two adjacent edges, while $(2,0.9)$ lies on another edge.}
    \label{fig:counterexample-idea}
\end{figure}

Our counterexample is designed to contain $(1,0.9)$, of weight $9/20$, while capturing nearly one unit of the weighted line $y = 1$. A sufficiently small concentric shrink moves the boundary point $(2,0.9)$ outside while preserving $(1,0.9)$ inside and keeping the side length greater than $1$. Since the shrunken square lies inside the original square, none of the remaining score contributions can increase. The two dominant contributions are then close to
\[
    \frac{9}{20}+\frac{1}{2} = \frac{19}{20}.
\]
With a sufficiently small upper cap and a short intersection with the vertical weighted line, this square can have total score less than $1$. In the next section, we make this construction explicit.

\section{Constructing a counterexample}\label{sec:counterexample}

Fix arbitrary real numbers $a > 3$ and $b > 2$. We now restrict attention to squares near the lower-left corner of $R = [0,a]\times[0,b]$. Specifically, consider squares $S$ satisfying
\begin{equation}\label{eq:local-neighborhood}
    S \subseteq [0,3)\times[0,2),\qquad \max_{(x,y) \in S}x < a-1,\qquad \max_{(x,y) \in S}y < b-1.
\end{equation}
The only weighted points that can meet such a square are
\[
    Q_0 = \{(0.9,1),(1,0.9)\} \subseteq Q,\qquad P_0 = \{(2,0.9)\} \subseteq P.
\]
The segments $L_2$ and $L_4$, on $y = b-1$ and $x = a-1$, are also disjoint from $S$. Thus the complete score \eqref{eq:square-score} reduces in this neighborhood to
\begin{equation}\label{eq:local-score}
    \sigma(S) = \area(S\cap R^*)+\frac{1}{2}\length(S\cap L_1)+\frac{1}{2}\length(S\cap L_3)+\frac{9}{20}\#(S\cap Q_0)+\frac{1}{2}\#(S\cap P_0).
\end{equation}
\Cref{fig:nagamochi-local} shows these local supports.

\begin{figure}[!ht]
    \centering
    \begin{tikzpicture}[scale=2]
        \fill[gray!10] (1, 1) rectangle (2.8, 1.8);
        \draw[->] (-0.1, 0) -- (3, 0) node[right] {$x$};
        \draw[->] (0, -0.1) -- (0, 2) node[above] {$y$};
        \node[below left] at (0, 0) {$0$};
        \draw (1, 0.025) -- (1, -0.025) node[below] {$1$};
        \draw (2, 0.025) -- (2, -0.025) node[below] {$2$};
        \draw (0.025, 1) -- (-0.025, 1) node[left] {$1$};
        \draw[ultra thick, ->] (0.9, 1) -- (2.8, 1);
        \node[below] at (2.6, 1) {$L_1$};
        \draw[ultra thick, ->] (1, 0.9) -- (1, 1.8);
        \node[left] at (1, 1.65) {$L_3$};
        \node at (2.1, 1.5) {$R^*$};
        \filldraw[black] (0.9, 1) circle[radius=0.7pt];
        \node[left] at (0.9, 1) {$Q_0$};
        \filldraw[black] (1, 0.9) circle[radius=0.7pt];
        \node[below] at (1, 0.9) {$Q_0$};
        \filldraw[black] (2, 0.9) circle[radius=0.7pt];
        \node[below] at (2, 0.9) {$P_0$};
    \end{tikzpicture}
    \caption{The relevant part of Nagamochi's scoring construction near the lower-left corner.}
    \label{fig:nagamochi-local}
\end{figure}

Choose
\begin{equation}\label{eq:counterexample-parameters}
    0 < t \leq \frac{1}{50},\qquad t < \min\{10(a-3),b-2\}.
\end{equation}
The second condition keeps the construction away from the opposite weighted supports. For every such rectangle, admissible values of $t$ exist arbitrarily close to $0$. Let $K_t$ be the contact square with vertices
\begin{equation}\label{eq:contact-square}
    \begin{aligned}
        A &= \left(2-\frac{9}{10}t,0\right), & B &= \left(2+\frac{1}{10}t,1\right), \\
        C &= \left(1+\frac{1}{10}t,1+t\right), & D &= \left(1-\frac{9}{10}t,t\right),
    \end{aligned}
\end{equation}
listed in cyclic order. Its consecutive edge vectors $(t,1)$ and $(-1,t)$ are perpendicular and have equal length. Its side length therefore satisfies
\begin{equation}\label{eq:counterexample-side}
    1 < \lambda_t = \sqrt{1+t^2} \leq \sqrt{1+\frac{1}{2500}} < \frac{101}{100}.
\end{equation}
All its coordinates are nonnegative, and
\begin{equation}\label{eq:contact-bounds}
    \max_{(x,y) \in K_t}x = 2+\frac{t}{10} < \min\{3,a-1\},\qquad \max_{(x,y) \in K_t}y = 1+t < \min\{2,b-1\}.
\end{equation}
Thus $K_t$ satisfies \eqref{eq:local-neighborhood}.

At height $y = 0.9$, its horizontal cross-section is $[1-t^2,2]\times\{0.9\}$; see \eqref{eq:appendix-point-row}. Hence $(1,0.9)$ is in its interior and $(2,0.9)$ lies on $AB$. The vertex $B$ lies exactly on $y = 1$, which simplifies the calculation. \Cref{app:counterexample-calculations} shows that $K_t$ contains exactly one $Q_0$-point, that its intersections with $L_1$ and $L_3$ have lengths $1+t^2$ and $t$, and that its area inside $R^*$ is $t(1+t^2)/2$. Its full score includes the mass $1/2$ of the boundary point $(2,0.9)$. Subtracting this contribution gives
\begin{equation}\label{eq:contact-score}
    \sigma(K_t)-\frac{1}{2} = \frac{19}{20}+t+\frac{1}{2}t^2+\frac{1}{2}t^3.
\end{equation}
The polynomial on the right is strictly increasing for $t > 0$ and equals $242551/250000$ at $t = 1/50$. Therefore,
\begin{equation}\label{eq:kt-score}
    \sigma(K_t\setminus P_0) = \sigma(K_t)-\frac{1}{2} \leq \frac{242551}{250000} < 1.
\end{equation}
For each fixed $t$, shrink $K_t$ about its center by a factor less than $1$ and sufficiently close to $1$, obtaining a square $S_t$ whose side length remains greater than $1$ and whose interior still contains $(1,0.9)$. Since $S_t \subseteq K_t^\circ$, it avoids $P_0$ and satisfies $\sigma(S_t) \leq \sigma(K_t\setminus P_0) < 1$. Its side length is also less than $101/100$ by \eqref{eq:counterexample-side}, so it contradicts \eqref{eq:nagamochi-lemma1}.

To exhibit the missing contact configuration, shrink $K_t$ about $A$ by a factor less than $1$ and sufficiently close to $1$, preserving $(1,0.9)$ in the interior and keeping the side length greater than $1$. The point $(2,0.9)$ remains in the relative interior of the image of $AB$. The images of $B$ and $D$ lie below $y = 1$, while the image of $C$ remains above it. Thus $y = 1$ cuts the images of $BC$ and $CD$, and the contact edge is different from both. In particular, it is not the designated edge $e_2$ required by \cite[Lemma 6]{Nagamochi}. The concentric shrink supplies the score counterexample; the shrink about $A$ serves only to exhibit the missing edge incidence.

\section{A strip measure for rectangles}\label{sec:replacement}

We prove \Cref{thm:rectangle-one-direction} without invoking the scoring assertion or the technical lemmas of \cite{Nagamochi}. Following the same principle of assigning weights to areas, line segments, and points as in \cite[Section 3]{Nagamochi}, we construct a finite nonnegative measure adapted to a horizontal strip. Its total mass is $ab-\Delta(a)$, while the interior of every square of side length slightly greater than $1$ has measure greater than $1$.

Fix $a \geq 2$ and $b \geq 3$, set $R = [0,a]\times[0,b]$, and define
\begin{align*}
    H &= [0,a]\times[1,b-1],\\
    L_- &= [0,a]\times\{1\},\\
    L_+ &= [0,a]\times\{b-1\},\\
    W &= \{(j,4/5),(j,b-4/5):j = 1,\ldots,\lceil a\rceil-1\}.
\end{align*}
Assign area density $1$ to $H$, line density $1/2$ to each of $L_-$ and $L_+$, and point mass $1/2$ to each point of $W$. For a Borel set $E \subseteq R$, define
\begin{equation}\label{eq:strip-measure}
    \mu(E) = \area(E\cap H)+\frac{1}{2}\length(E\cap L_-)+\frac{1}{2}\length(E\cap L_+)+\frac{1}{2}\#(E\cap W).
\end{equation}
Here $\area$ is two-dimensional Lebesgue measure, and $\length$ on each supporting line is one-dimensional Lebesgue measure. The two rows of $W$ are distinct and each contains $\lceil a\rceil-1$ points. Thus $\mu$ is a finite nonnegative measure, with total mass
\begin{equation}\label{eq:replacement-total}
    \mu(R) = a(b-2)+\frac{a}{2}+\frac{a}{2}+\frac{1}{2}(2\lceil a\rceil-2) = ab-\Delta(a).
\end{equation}
\Cref{fig:replacement} shows the construction. For each packed square $S$, we estimate $\mu(S^\circ)$, the mass in its interior.

\begin{figure}[htbp]
    \centering
    \begin{tikzpicture}[scale=1.1]
        \pgfmathsetmacro{\rectwidth}{6.6}
        \pgfmathsetmacro{\rectheight}{4.4}
        \fill[gray!10] (0, 1) rectangle (\rectwidth, \rectheight-1);
        \draw (-0.2, 0) -- (\rectwidth+0.2, 0);
        \draw (\rectwidth, -0.2) -- (\rectwidth, \rectheight+0.2);
        \draw (\rectwidth+0.2, \rectheight) -- (-0.2, \rectheight);
        \draw (0, \rectheight+0.2) -- (0, -0.2);
        \node[left] at (-0.2, \rectheight) {$b$};
        \node[below] at (\rectwidth, -0.2) {$a$};
        \node[left] at (-0.2, \rectheight-1) {$b-1$};
        \node[left] at (-0.2, 1) {$1$};
        \node[below left] at (-0.2, -0.2) {$0$};
        \node at (\rectwidth/2, \rectheight/2) {$H$};
        \draw[ultra thick] (0, 1) -- (\rectwidth, 1);
        \node[above] at (\rectwidth/2, 1) {$L_-$};
        \draw[ultra thick] (\rectwidth, \rectheight-1) -- (0, \rectheight-1);
        \node[below] at (\rectwidth/2, \rectheight-1) {$L_+$};
        \foreach \x in {1, ..., 6} {
            \filldraw[black] (\x, 0.8) circle[radius=1pt];
            \node[below] at (\x, 0.8) {$W$};
            \filldraw[black] (\x, \rectheight-0.8) circle[radius=1pt];
            \node[above] at (\x, \rectheight-0.8) {$W$};
        }
    \end{tikzpicture}
    \caption{The strip measure in a rectangle. The two rows of point masses have integer abscissae strictly between $0$ and $a$, at heights $4/5$ and $b-4/5$.}
    \label{fig:replacement}
\end{figure}

\begin{proposition}\label[proposition]{prop:certificate}
    Every square $S \subseteq R$ of side length $1 < \lambda \leq 101/100$ satisfies $\mu(S^\circ) > 1$.
\end{proposition}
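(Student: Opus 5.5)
The plan is to compare $\mu(S^\circ)$ with $\area(S)=\lambda^2>1$ and to show that whatever area of $S$ lies outside the strip $H$ is paid for by the line masses and the points of $W$ that $S$ captures. Write $S_-=S\cap\{y<1\}$ and $S_+=S\cap\{y>b-1\}$. Let $\ell_\pm$ be the lengths of the chords $S\cap L_\pm$ and $A_\pm=\area(S_\pm)$. Boundary contributions have measure zero for area and length, so
\[
  \mu(S^\circ)\ \ge\ \lambda^2-A_--A_++\tfrac12\ell_-+\tfrac12\ell_++\tfrac12\#(S^\circ\cap W).
\]
It therefore suffices to prove the one-sided inequality
\[
  A_-\ \le\ \tfrac12\ell_-+\tfrac12\#\bigl(S^\circ\cap W\cap\{y=4/5\}\bigr),
\]
and its mirror image for $S_+$; strictness then comes from $\lambda^2>1$. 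The two rows of $W$ are distinct, so the two sides use disjoint point masses. This holds even if $b-2$ is small and $S$ crosses both lines. Only the lower side needs treatment, by the reflection $y\mapsto b-y$.

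For the lower side, I use the horizontal width function $w(y)=\length(S\cap\{\text{height }y\})$. Since $S$ is convex, $w$ is concave on its support $[y_{\min},y_{\max}]$. Moreover $y_{\min}\ge0$, because $S\subseteq R$. The square has vertical extent at least $\lambda>1$, so it cannot lie inside $\{y<1\}$; hence either $A_-=0$ or $w(1)=\ell_-$. Then $A_-=\int_{y_{\min}}^{1}w(y)\,dy$, an integral over an interval of length at most $1$.

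The first key observation is this. If the open cross-section of $S^\circ$ at height $4/5$ has length greater than $1$, it is an open interval inside $(0,a)$. Such an interval contains an integer $j$ with $1\le j\le\lceil a\rceil-1$, so $S^\circ$ contains a point of $W$ and we gain $\tfrac12$. The case split is then:

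\textbf{Case 1.} $y_{\min}\ge 4/5$. Then $A_-\le$ (depth $\le 1/5$) times the maximal width. A direct square estimate, splitting by whether $y=1$ cuts adjacent or opposite edges, should give $A_-\le\tfrac12\ell_-$. For adjacent edges the cap is a triangle with base $\ell_-$; for opposite edges the width is essentially constant near the cut, so $A_-\lesssim\ell_-/5$.

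\textbf{Case 2.} $y_{\min}<4/5$ and $S$ captures a $W$-point. We need $A_-\le\tfrac12\ell_-+\tfrac12$. I expect to bound $A_-$ by integrating $w$ with $w\le\lambda\sqrt2$. The slack must come from the fact that widths below $1$ are at most $\approx\lambda$ unless the square is tilted, and tilting forces $\ell_-$ to grow.

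\textbf{Case 3.} $y_{\min}<4/5$ and no $W$-point is captured, so $w(4/5)\le1$. Concavity, together with $w(y_{\min})\ge0$, $w(4/5)\le1$ and $w(1)=\ell_-$, should force $A_-\le\tfrac12\ell_-$. The triangle-cap case gives exactly $\ell_-(1-y_{\min})/2\le\ell_-/2$.

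The main obstacle is Cases 2 and 3 when the maximum of $w$ lies below $y=1$. In that situation $w$ can exceed $\ell_-$ in the middle of the cap, and pure concavity is not enough. I would first try to exploit the exact shape of a square's width profile: a trapezoid, linear–constant–linear with slopes $1/(\sin\theta\cos\theta)$. Then I would optimize over the tilt $\theta$ and the vertical position, using $\lambda\le101/100$ to keep the constants in range. If that optimization is too tight near $\theta\approx\pi/4$, the fallback is to note that such a square has $\ell_-$ close to $\lambda\sqrt2$. This happens because the widest level of such a square lies within about $0.71\lambda$ of its lowest point. The chord term alone then nearly covers the cap.
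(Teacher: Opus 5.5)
There is a genuine gap. The one-sided inequality you reduce to is false, so the intermediate target $\mu(S^\circ)\ge\lambda^2$ cannot be reached by any argument. Take $b\ge3$ and the axis-parallel square $S=[\frac12,\frac12+\lambda]\times[0,\lambda]$. Then $A_-=\lambda$ and $\ell_-=\lambda$, and $S^\circ$ contains exactly one point of $W$, namely $(1,4/5)$. Your Case~2 requirement $A_-\le\frac12\ell_-+\frac12$ therefore becomes $\lambda\le\frac{\lambda}{2}+\frac12$, which fails for every $\lambda>1$. Directly, $\mu(S^\circ)=\lambda(\lambda-1)+\frac{\lambda}{2}+\frac12=\lambda^2-\frac{\lambda-1}{2}<\lambda^2$. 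So the slack $\lambda^2-1$ is not just a source of strictness; it must absorb a real deficit.

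Your diagnosis of the main obstacle is also misplaced. Tilted squares are not the difficulty: with $\theta=\pi/4$ and the lowest vertex on $y=0$ one gets $A_- -\frac12\ell_-=\lambda\sqrt2-\lambda^2<\frac12$. The extremal configuration is the untilted square resting on the floor, where $w\equiv\lambda$ on $[0,1]$.

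The repair is to allow a per-side deficit of $\frac{\lambda-1}{2}$, that is, to prove $A_-\le\frac12\ell_-+\frac{\lambda}{2}$ when a lower $W$-point is captured. Two such deficits still leave $\lambda^2-(\lambda-1)>1$.

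For center height $z_c\le1$, this bound is exactly \Cref{lem:boundary-estimates}(ii): $\area(S\cap\{y\ge1\})+\frac12\ell_-\ge\lambda^2-\frac{\lambda}{2}$. It is proved from the explicit trapezoidal chord profile, separating the case where $y=1$ cuts two opposite sides from the case of an upper triangular cap. Equality is attained by the floor square above. Part~(i) of that lemma guarantees that the chord at height $4/5$ exceeds $1$, so a point is always captured when $z_c\le1$.

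The paper organizes the proof by center height rather than by your three cases.
\begin{itemize}
\item For $z_c\le1$, the top of $S$ lies below $2\le b-1$, giving $\mu(S^\circ)\ge1+(\lambda-1)(\lambda+\frac12)$.
\item For $1\le z_c\le b-1$, no points are used. Triangular caps satisfy $D/\ell\le\lambda/(2\sqrt2)<\frac12$. A nontriangular cut loses at most half the area while contributing line mass at least $\frac{\lambda}{2}$.
\end{itemize}

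Your Cases~1 and~3 can be completed with the explicit profile; in Case~3 the level $4/5$ must lie in the lower triangle, which forces $1-y_{\min}\le\frac15+\sin\theta\cos\theta$. Case~2, however, must be restated with the weaker target before any estimate can succeed.
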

The upper bound $101/100$ is a convenient choice, also used in \cite{Nagamochi}. We first establish elementary geometric estimates. They play roles analogous to the chord and cap estimates in \cite[Lemmas 2--5]{Nagamochi}, but the proofs below are independent of those assertions.

\begin{lemma}[Estimates near a horizontal boundary]\label[lemma]{lem:boundary-estimates}
    Let $S \subseteq \{y \geq 0\}$ be a square of side $1 < \lambda \leq 101/100$ whose center has height at most $1$. Then the following hold.
    \begin{enumerate}
        \item[(i)] For every $r$ satisfying $\frac{3-\sqrt{2}}{2} < r < \sqrt{2}-\frac{1}{2}$,
        \[
            \length(S^\circ\cap\{y = r\}) > 1.
        \]
        \item[(ii)] For $A = \area(S\cap\{y \geq 1\})$ and $\ell = \length(S^\circ\cap\{y = 1\})$,
        \[
            A + \frac{\ell}{2} \geq \lambda^2-\frac{\lambda}{2}.
        \]
    \end{enumerate}
\end{lemma}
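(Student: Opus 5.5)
Throughout, parametrize $S$ by its rotation angle $\theta \in [0,\pi/4]$, which we may take by the symmetries of the square. Write $c = \cos\theta$, $s = \sin\theta$, $p = c+s \in [1,\sqrt2]$, so that $sc = (p^2-1)/2$. Let $y_0 \geq 0$ be the height of the lowest vertex of $S$ and $h$ the height of its center. The vertical extent of $S$ is $\lambda p$, hence $h = y_0+\lambda p/2$, and the hypotheses give $0 \leq y_0 \leq 1-\lambda p/2$. The basic tool is the horizontal chord function $g(y) = \length(S^\circ\cap\{y\})$. It is a symmetric trapezoid profile:
\[
    g(y) = \min\Bigl\{\frac{\lambda}{c},\ \frac{y-y_0}{sc},\ \frac{y_0+\lambda p-y}{sc}\Bigr\}
\]
for $y_0 < y < y_0+\lambda p$ and $\theta > 0$. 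For $\theta = 0$ we simply have $g \equiv \lambda$ on the open vertical range.

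For (i), the plateau value $\lambda/c \geq \lambda > 1$, so it suffices to show that both sloped terms exceed $1$ at $y = r$. That is, we need $r-y_0 > sc$ and $y_0+\lambda p-r > sc$.
\begin{itemize}
    \item For the first, use $y_0 \leq 1-p/2$ (since $\lambda > 1$). This reduces the condition to $r > 1-p/2+(p^2-1)/2$. The right side is increasing in $p$ on $[1,\sqrt2]$, so its maximum is at $p = \sqrt2$, where it equals $(3-\sqrt2)/2$.
    \item For the second, use $y_0 \geq 0$ and $\lambda > 1$. This reduces the condition to $r < p-(p^2-1)/2$. The right side is decreasing on $[1,\sqrt2]$, with minimum $\sqrt2-1/2$.
\end{itemize}
The strict inequalities in the hypothesis on $r$ then give (i). The case $\theta = 0$ is immediate.

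For (ii), set $B = \area(S\cap\{y \leq 1\}) = \lambda^2-A$, so the claim is equivalent to $B \leq \tfrac{\lambda}{2}+\tfrac{\ell}{2}$. The line $y=1$ lies strictly below the top vertex, because the top vertex has height $y_0+\lambda p \geq \lambda > 1$; hence $\ell = g(1) > 0$. Since $h \leq 1$, write $B = \lambda^2/2+\int_h^1 g$. Here $1-h \leq 1-\lambda p/2$, and $g$ is nonincreasing on $[h,1]$. The sanity case $\theta = 0$ gives exactly $\lambda^2/2+\lambda(1-h) \leq \lambda$, which is the required bound, with equality when $y_0 = 0$. This suggests the extremal configuration is $y_0 = 0$. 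So I would first show that $B-\ell/2$ is nondecreasing as $y_0$ decreases (equivalently as $h$ decreases toward $\lambda p/2$ with the line fixed). Its derivative with respect to $y_0$ is $-g(1)-\tfrac12 g'(1)$ up to sign conventions, which can be checked from the piecewise formula. This reduces to $y_0 = 0$.

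With $y_0 = 0$, I would split into two cases according to where $y=1$ cuts the profile: the plateau $\lambda s \leq 1 \leq \lambda c$, or the upper sloped part $1 > \lambda c$. In each case $B$ and $\ell$ are explicit polynomial/trigonometric expressions in $\lambda$ and $\theta$. The inequality becomes a two-variable inequality on $[1,101/100]\times[0,\pi/4]$, which I expect to verify by monotonicity in $\lambda$ and then a one-variable check in $\theta$. The main obstacle is this final case verification, especially the upper-sloped case near $\theta = \pi/4$. There the margin is smallest, and the triangular cap makes $B$ quadratic in $1-\lambda c$. If the monotonicity reduction in $y_0$ proves awkward, I would instead bound $\int_h^1 g$ directly by integrating the explicit profile over $[h,1]$ and compare with $\ell/2$ term by term.
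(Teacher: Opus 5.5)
Your part (i) is complete and is essentially the paper's argument. Writing the chord as the minimum of the plateau value $\lambda/c$ and the two sloped terms replaces the paper's concavity-and-endpoint check. Your two monotonicity facts in $p$ are exactly the paper's trigonometric bounds, which in your notation read $\frac p2-sc\ge\frac{\sqrt2-1}{2}$ and $p-sc\ge\sqrt2-\frac12$.

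For (ii), your reduction to $y_0=0$ is sound. The paper does the same thing implicitly, by bounding the center height below by half the vertical extent inside the exact formula of each case.
\begin{itemize}
\item The reduction needs no derivative. Lowering the square increases $B$, and it cannot increase $\ell$ because $y=1$ stays at or above the center.
\item If you do differentiate, the expression is $-g(1)+\tfrac12 g'(1)$ with $g'$ the $y$-derivative, and both terms are nonpositive. With the sign you wrote, the cap case would give $(\tfrac12-H)/(sc)>0$ whenever the cap height $H$ is below $\tfrac12$, and the reduction would appear to fail.
\end{itemize}

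The real gap is that you stop exactly where the content of (ii) lies: the final inequality is only announced as something you ``expect to verify''. As written, (ii) is a plan rather than a proof. The missing step is short and needs no two-variable numerical check, because both cases factor exactly at $y_0=0$.

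In the plateau case ($\lambda c\ge1$) one has
\[
    A+\frac{\ell}{2}-\Bigl(\lambda^2-\frac{\lambda}{2}\Bigr)=\frac{\lambda}{2c}\,(\lambda s+c-1)\ge0,
\]
since $s+c\ge1$.

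In the cap case ($\lambda c<1$), the cap has height $H=\lambda p-1$, with $\ell=H/(sc)$ and $A=H^2/(2sc)$. Using $p^2-2sc=1$,
\[
    A+\frac{\ell}{2}-\Bigl(\lambda^2-\frac{\lambda}{2}\Bigr)=\frac{\lambda}{2sc}\,(\lambda-p+sc),
\]
and $\lambda-p+sc>1-p+\frac{p^2-1}{2}=\frac{(p-1)^2}{2}\ge0$. These are precisely the paper's two identities.

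Your guess about where the margin is smallest is also wrong. Equality holds for the axis-parallel square resting on $y=0$, so the tight regime is $\theta\to0$, where the plateau margin $\frac{\lambda}{2c}(\lambda s+c-1)$ tends to $0$. At $\theta=\pi/4$ the cap margin is $\lambda(\lambda-\sqrt2+\frac12)>0.08$.
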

\begin{proof}
    Up to reflection and relabeling of the sides, use an orientation $0 \leq \theta \leq \pi/4$. Let
    \[
        h = \sin\theta + \cos\theta\qquad\text{ and }\qquad p = \sin\theta\cos\theta.
    \]
    For $\theta > 0$, the horizontal chord length at height $z$ above the lowest vertex is
    \begin{equation}\label{eq:chord}
        w(z) =
        \begin{cases}
            \dfrac{z}{p},&0 < z < \lambda \sin\theta,\\[6pt]
            \dfrac{\lambda}{\cos\theta},&\lambda \sin\theta \leq z \leq \lambda \cos\theta,\\[6pt]
            \dfrac{\lambda h-z}{p},&\lambda \cos\theta < z < \lambda (\sin\theta+\cos\theta) = \lambda h.
        \end{cases}
    \end{equation}
    \par\medskip
    \noindent\textit{Proof of (i).} Let $z_c$ denote the height of the center of $S$. Since $S \subseteq \{y \geq 0\}$, the vertical distance from the center of $S$ to its lowest point gives $z_c \geq \lambda h/2$. By hypothesis, $z_c \leq 1$.
    
    If $\theta = 0$, then $h = 1$, so $\lambda/2 \leq z_c \leq 1$. The lower and upper sides of $S$ therefore have heights
    \[
        z_c-\frac{\lambda}{2} \leq 1-\frac{\lambda}{2} < \frac{1}{2} < r \qquad\text{ and }\qquad z_c+\frac{\lambda}{2} \geq \lambda > 1 > r,
    \]
    respectively. Thus the line $y = r$ crosses the two vertical sides of $S$, and the resulting chord has length $\lambda > 1$.
    
    Suppose henceforth that $\theta > 0$. Let
    \[
        z_0 = z_c-\frac{\lambda h}{2}
    \]
    be the height of the lowest vertex of $S$. Since
    \[
        \frac{\lambda h}{2} \leq z_c \leq 1,
    \]
    the relative height of the line $y = r$ above this vertex,
    \[
        r-z_0 = r-z_c+\frac{\lambda h}{2},
    \]
    ranges between
    \[
        z_- = \frac{\lambda h}{2}+r-1\qquad\text{and}\qquad z_+ = r.
    \]
    Both values lie strictly between $0$ and $\lambda h$. Since the chord profile \eqref{eq:chord} is concave on its support, it is enough to check these two endpoint positions. We first record two trigonometric bounds.

    We use $p = (h^2-1)/2$ and $1 \leq h \leq \sqrt{2}$, which give
    \begin{equation}\label{eq:trig-bound-1}
        h-p \geq \sqrt{2}-\frac{1}{2} > r,
    \end{equation}
    \begin{equation}\label{eq:trig-bound-2}
        \frac{h}{2}-p \geq \frac{\sqrt{2}-1}{2} > 1-r.
    \end{equation}
    Both expressions on the left are decreasing functions of $h$ on $[1,\sqrt{2}]$, so their minima occur at $h = \sqrt{2}$.

    Since $z_- < \lambda h/2 \leq \lambda\cos\theta$, the height $z_-$ lies in the lower triangle or the plateau. Hence
    \begin{equation}\label{eq:chord-u}
        w(z_-) =
        \begin{cases}
            \dfrac{z_-}{p} > \dfrac{1}{p}\left(\dfrac{h}{2}+r-1\right) > 1\text{ by \eqref{eq:trig-bound-2}}, & 0 < z_- < \lambda \sin\theta,\\[6pt]
            \dfrac{\lambda}{\cos\theta} > \lambda > 1, & \lambda \sin\theta \leq z_- \leq \lambda \cos\theta.
        \end{cases}
    \end{equation}

    Similarly, $\lambda\sin\theta \leq 101/(100\sqrt{2}) < \frac{3-\sqrt{2}}{2} < z_+$, so the height $z_+$ lies in the plateau or the upper triangle. Thus
    \begin{equation}\label{eq:chord-v}
        w(z_+) =
        \begin{cases}
            \dfrac{\lambda}{\cos\theta} > \lambda > 1, & \lambda \sin\theta \leq z_+ \leq \lambda \cos\theta,\\[6pt]
            \dfrac{\lambda h-z_+}{p} > \dfrac{h-r}{p} > 1\text{ by \eqref{eq:trig-bound-1}}, & \lambda \cos\theta < z_+ < \lambda h.
        \end{cases}
    \end{equation}

    Both endpoint chords have length greater than $1$, proving (i).

    \par\medskip
    \noindent\textit{Proof of (ii).} Since $z_c \leq 1$ and the highest point has height $z_c+\lambda h/2 \geq \lambda h > 1$, the line $y = 1$ intersects the square at or above its center. It therefore either cuts two opposite sides or bounds an upper triangular cap.

    If the line cuts two opposite sides, the chord profile \eqref{eq:chord} gives
    \[
        \ell = \frac{\lambda}{\cos\theta},\qquad A = \frac{\lambda^2}{2}-\frac{\lambda}{\cos\theta}(1-z_c) \geq \frac{\lambda^2}{2}-\frac{\lambda}{\cos\theta}\left(1-\frac{\lambda h}{2}\right).
    \]
    Consequently,
    \[
        A+\frac{\ell}{2}-\left(\lambda^2-\frac{\lambda}{2}\right) \geq \frac{\lambda}{2\cos\theta}(\lambda\sin\theta+\cos\theta-1) \geq 0,
    \]
    because $\lambda\sin\theta+\cos\theta \geq \sin\theta+\cos\theta \geq 1$. This also covers the axis-parallel orientation $\theta = 0$.

    If the line creates an upper triangular cap, the chord profile \eqref{eq:chord} and $p > 0$ give
    \[
        \ell = \frac{z_c+\lambda h/2-1}{p} \geq \frac{\lambda h-1}{p},\qquad A = \frac{\ell}{2}\left(z_c+\frac{\lambda h}{2}-1\right) \geq \frac{(\lambda h-1)^2}{2p}.
    \]
    Consequently,
    \[
        A+\frac{\ell}{2}-\left(\lambda^2-\frac{\lambda}{2}\right) \geq \frac{\lambda}{2p}(\lambda-h+p) \geq 0,
    \]
    because $2p = h^2-1$ and $\lambda-h+p > 1-h+p = (h-1)^2/2 \geq 0$.
\end{proof}

For a square $S \subseteq R$ satisfying the lemma's hypotheses, taking $r = 4/5$ in \Cref{lem:boundary-estimates}(i) shows that $S^\circ\cap\{y = 4/5\}$ is an open interval of length greater than $1$. Its set of abscissae is an open interval of length greater than $1$ contained in $[0,a]$, and hence contains an integer in $\{1,\ldots,\lceil a\rceil-1\}$. Thus $S^\circ$ contains at least one weighted point of $W$, contributing $1/2$ to $\mu(S^\circ)$.

\begin{proof}[Proof of \Cref{prop:certificate}]
    Let $z_c$ be the height of the center of $S$.

    \par\medskip
    \noindent\textit{Centers near the bottom or top.} Suppose first that $z_c \leq 1$. Its maximum height is at most
    \[
        z_c+\frac{\lambda}{\sqrt{2}} \leq 1+\frac{101\sqrt{2}}{200} < 2 \leq b-1.
    \]
    Thus $S$ cannot lose weighted area through the top of $H$; its area contribution is exactly $A$ from \Cref{lem:boundary-estimates}(ii). \Cref{lem:boundary-estimates}(i) supplies a weighted point in the interior, contributing mass $1/2$. Combining the two estimates gives
    \[
        \mu(S^\circ) \geq \lambda^2-\frac{\lambda}{2}+\frac{1}{2} = 1+(\lambda-1)\left(\lambda+\frac{1}{2}\right) > 1.
    \]
    Reflection in $y = b/2$ proves the same assertion when $z_c \geq b-1$.

    \par\medskip
    \noindent\textit{Centers in the strip.} It remains to consider $1 \leq z_c \leq b-1$. Call a horizontal boundary of $H$ a \emph{proper cut} if $S$ has positive area on both sides of that line. Every proper cut either removes a triangular cap or meets the interiors of two opposite sides of $S$.

    For a triangular cap with side lengths $u,v \leq \lambda$ measured along two adjacent sides of $S$, let $D$ be its area and $\ell$ its base length. Then
    \begin{equation}\label{eq:cap-compensation}
        \frac{D}{\ell} = \frac{uv}{2\sqrt{u^2+v^2}} \leq \frac{\sqrt{uv}}{2\sqrt{2}} \leq \frac{\lambda}{2\sqrt{2}} < \frac{1}{2}.
    \end{equation}
    Thus the line mass $\ell/2$ more than compensates for the lost area.

    If every proper cut is triangular, compensate every triangular loss using \eqref{eq:cap-compensation}. The area and line contributions are at least $\lambda^2 > 1$.

    If exactly one proper cut is not triangular, its chord has length at least $\lambda$. The center is on the retained side, so at most half the square's area is lost through that boundary. Compensating any triangular loss at the other boundary gives
    \[
        \mu(S^\circ) \geq \frac{\lambda^2}{2}+\frac{\lambda}{2} > 1.
    \]

    If both proper cuts are nontriangular, their line contributions alone total at least $\lambda > 1$.

    These cases include $b = 3$, when a square can cross both horizontal boundaries of the strip. They also account for tangencies, since only proper cuts require compensation. This completes the proof.
\end{proof}

\begin{proof}[Proof of \Cref{thm:rectangle-one-direction}]
    Let $M = \nu(a,b)$, and choose an $a'\times b'$ rectangle with $a' < a$ and $b' < b$ admitting a packing of $M$ unit squares. Choose $\lambda$ sufficiently close to $1$ that
    \[
        1 < \lambda \leq \frac{101}{100},\qquad \lambda a' < a,\qquad \lambda b' < b.
    \]
    Uniformly scaling the packing by $\lambda$ gives squares $S_1,\ldots,S_M$ of side $\lambda$ with pairwise disjoint interiors inside $R$. By additivity and nonnegativity of $\mu$, \Cref{prop:certificate}, and \eqref{eq:replacement-total},
    \[
        M < \sum_{i = 1}^{M}\mu(S_i^\circ) = \mu\left(\bigcup_{i = 1}^{M}S_i^\circ\right) \leq \mu(R) = ab-\Delta(a).
    \]
    This proves \eqref{eq:rectangle-one-direction}. When $a,b \geq 3$, rotating the construction gives $M < ab-\Delta(b)$ as well. Taking the stronger of the two inequalities yields \eqref{eq:rectangle-symmetric}.
\end{proof}

\section{Consequences for rectangle and square packings}\label{sec:consequences}

For integer target side lengths, the correction $\Delta$ equals $1$, giving the following obstruction.
\begin{corollary}[Integer rectangles]\label[corollary]{cor:integer-rectangles}
    Let $m,n \geq 2$ be integers. No $m'\times n'$ rectangle with $m' < m$ and $n' < n$ can contain $mn-1$ unit squares with pairwise disjoint interiors.
\end{corollary}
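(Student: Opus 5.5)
The plan is to read the corollary off \Cref{thm:rectangle-one-direction} applied with integer side lengths, and to treat separately the single case that the theorem's hypotheses miss. Since $\nu$ is defined as a maximum over packings, it suffices to show $\nu(m,n) < mn-1$. The corollary is symmetric in $m$ and $n$ (rotate the rectangle by a right angle), and so is $\nu$. Hence we may assume $m \leq n$.

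\emph{Case $n \geq 3$.} Put $a = m \geq 2$ and $b = n \geq 3$, so that \eqref{eq:rectangle-one-direction} applies. For an integer $a$ we have $\lceil a\rceil = a$, and therefore $\Delta(a) = a+1-a = 1$. The theorem then gives $\nu(m,n) < mn-1$ directly. So no $m'\times n'$ rectangle with $m' < m$ and $n' < n$ contains $mn-1$ unit squares with pairwise disjoint interiors.

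\emph{Case $m = n = 2$.} This is the one case the theorem does not cover, since it requires $b \geq 3$ and the strip measure needs room for two boundary rows plus a strip. Here we must show that $3$ unit squares do not fit in any $m'\times n'$ rectangle with $m',n' < 2$. Such a rectangle lies inside a square of side $\max\{m',n'\} < 2$. A packing of $3$ unit squares there would therefore give $s(3) < 2$, contradicting the classical identity $s(3) = 2$ \cite[Theorem~1]{Friedman}, which the paper already quotes.

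I expect no real obstacle. The only point needing care is this $2\times 2$ case. If one wants to avoid the citation, a direct argument is available: any unit square in a square of side $<2$ contains its centre point in its interior, so two such squares cannot be disjoint. The cleanest route, though, is simply to invoke $s(3)=2$.
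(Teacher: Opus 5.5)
Your proposal is correct and follows the paper's proof essentially verbatim. You reduce to $m\le n$ and apply the rectangle bound with $a=m$, $b=n$ and $\Delta(m)=1$ when $n\ge 3$. You then handle $m=n=2$ by placing the $m'\times n'$ rectangle inside a square of side $\max\{m',n'\}<2$ and invoking $s(3)=2$. Your optional centre-point remark is also valid, since it is the classical lemma behind $s(2)=2$, but you would need to prove it if you relied on it instead of the citation.
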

\begin{proof}
    By interchanging the two directions, assume $n \geq m$. If $n \geq 3$, \Cref{thm:rectangle-one-direction} with $a = m$ and $b = n$ gives $\nu(m,n) < mn-\Delta(m) = mn-1$. If $m = n = 2$, the smaller rectangle is contained in a square of side $\max\{m',n'\} < 2$, which cannot contain three unit squares by the classical identity $s(3) = 2$ \cite[Theorem 1]{Friedman}.
\end{proof}

\begin{proof}[Proof of \Cref{cor:minus-one}]
    Taking $m = n = k$ in \Cref{cor:integer-rectangles} shows that $k^2-1$ unit squares cannot fit in a square of side less than $k$. The $k\times k$ grid with one square removed gives the reverse inequality, so $s(k^2-1) = k$.
\end{proof}

The rectangle theorem also gives an explicit lower bound for every nonsquare integer $N \geq 8$.
\begin{corollary}[A general lower bound]\label[corollary]{cor:replacement-sN-bound}
    Let $N \geq 8$ be a nonsquare integer. Then
    \begin{equation}\label{eq:replacement-sN-bound}
        s(N) \geq \frac{1}{2}+\sqrt{N-\lfloor\sqrt{N}\rfloor+\frac{1}{4}} > \sqrt{N}.
    \end{equation}
\end{corollary}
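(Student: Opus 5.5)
The plan is to apply the symmetric bound \eqref{eq:rectangle-symmetric} of \Cref{thm:rectangle-one-direction} with $a=b=s'$ for side lengths $s'$ slightly larger than $s(N)$. This turns into a quadratic inequality in $s(N)$, which we then solve.

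Throughout, write $m=\lfloor\sqrt N\rfloor$, so $m^2<N\le m^2+2m$ because $N$ is nonsquare. Note that the target inequality $s\ge \tfrac12+\sqrt{N-m+\tfrac14}$ is equivalent, for $s\ge \tfrac12$, to
\[
s^2-s\ge N-m .
\]
So it suffices to prove $s(N)^2-s(N)\ge N-m$.

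\textbf{Step 1: $s(N)\ge 3$.} The theorem needs $a,b\ge 3$, so we first show $s(N)\ge 3$. Suppose $N\ge 8$ unit squares fit in a square of side $t<3$. Then in particular $8=3\cdot3-1$ unit squares fit in a $t\times t$ rectangle with $t<3$. This contradicts \Cref{cor:integer-rectangles} with $m=n=3$.

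\textbf{Step 2: case $s(N)\ge m+1$.} Write $s=s(N)$. Here the target holds directly: $s^2-s$ is increasing for $s\ge 1$, so
\[
s^2-s\ge (m+1)^2-(m+1)=m^2+m\ge N-m,
\]
since $N\le m^2+2m$.

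\textbf{Step 3: case $s(N)<m+1$.} Pick any $s'$ with $s<s'\le m+1$. Then $N$ unit squares fit in some square of side strictly less than $s'$, so $\nu(s',s')\ge N$. Since $s'\ge 3$, \eqref{eq:rectangle-symmetric} applies and gives
\[
N<s'^2-\Delta(s')=s'^2-s'-1+\lceil s'\rceil\le s'^2-s'+m,
\]
where the last step uses $\lceil s'\rceil\le m+1$. Letting $s'\downarrow s$ yields $s^2-s\ge N-m$, as required.

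\textbf{Step 4: strict improvement over $\sqrt N$.} It remains to show $\tfrac12+\sqrt{N-m+\tfrac14}>\sqrt N$. Since $\sqrt N>\tfrac12$, this is equivalent after squaring to
\[
N-m+\tfrac14>N-\sqrt N+\tfrac14,
\]
that is, $\sqrt N>m$. This holds exactly because $N$ is nonsquare.

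The only step needing care is the choice of $s'$ in Step 3. The case split in Steps 2 and 3 exists so that $\lceil s'\rceil\le m+1$ while $s'$ still approaches $s(N)$, together with the observation in Step 1 that the hypothesis $b\ge 3$ of the theorem is met. Everything else is routine algebra.
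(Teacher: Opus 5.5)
Your proof is correct and is essentially the paper's argument: both apply \Cref{thm:rectangle-one-direction} with $a=b$ and use $\lceil t\rceil\le m+1$ on $(m,m+1]$ to reduce everything to the quadratic $t^2-t+m$. The paper evaluates once at the explicit threshold $t=\frac12+\sqrt{N-m+\frac14}$, checking $m<t\le m+1$ and $t\ge 3$ directly, whereas you approach $s(N)$ from above by a limit with a case split and get $s(N)\ge 3$ from \Cref{cor:integer-rectangles}; these differences are only organizational.
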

\begin{proof}
    Let $k = \lfloor\sqrt{N}\rfloor$. Since $N$ is a nonsquare integer, $k^2 < N \leq (k+1)^2-1$. By \Cref{thm:rectangle-one-direction}, any $t \geq 3$ satisfying $t^2-\Delta(t) = N$ gives $s(N) \geq t$. On the interval $k < t \leq k+1$, we have $\Delta(t) = t-k$, so this equation becomes $t^2-t+k = N$. The positive root of this quadratic is
    \[
        t = \frac{1}{2}+\sqrt{N-k+\frac{1}{4}}.
    \]
    The inequalities for $N$ give
    \[
        \left(k-\frac{1}{2}\right)^2 < N-k+\frac{1}{4} \leq \left(k+\frac{1}{2}\right)^2
    \]
    and therefore $k < t \leq k+1$. Hence $\lceil t\rceil = k+1$, and by construction
    \[
        t^2-\Delta(t) = t^2-t+k = N.
    \]

    We also have $t \geq 3$: for $N = 8$, the formula gives $t = 3$, while for $N > 8$ the nonsquare assumption implies $k \geq 3$ and hence $t > k \geq 3$. If $N$ unit squares could be packed in a square of side less than $t$, then \Cref{thm:rectangle-one-direction}, applied with $a = b = t$, would give
    \[
        N \leq \nu(t,t) < t^2-\Delta(t) = N,
    \]
    a contradiction. Thus $s(N) \geq t$.

    Finally, since $t > k$,
    \[
        t^2 = N+t-k > N,
    \]
    and hence $t > \sqrt{N}$.
\end{proof}

For $N = n^2-1$ with $n \geq 3$, the lower bound in \eqref{eq:replacement-sN-bound} is exactly $n$. Thus the family in \Cref{cor:minus-one} also occurs at the endpoints of this general bound.

\section{Concluding remarks}\label{sec:scope}

\Cref{sec:counterexample} gives a local family violating the scoring assertion in \cite[Lemma~1]{Nagamochi}. The family applies near a corner of every rectangle with $a > 3$ and $b > 2$ once the parameter is sufficiently small. Shrinking the same contact square about a vertex produces a strict configuration exhibiting the missing edge-incidence condition in the application of \cite[Lemma~6]{Nagamochi}. The published derivation of the rectangle bound and its consequences therefore requires a correction or an additional argument; the family is not itself a packing that violates those bounds.

The strip measure provides a partial replacement for Nagamochi's theorem. For $a,b \geq 3$, it yields the correction $\max\{\Delta(a),\Delta(b)\}$ in place of $\Delta(a)+\Delta(b)$. Its consequences include the integer-rectangle obstruction, the lower bound \eqref{eq:replacement-sN-bound}, and an independent proof of
\[
    s(k^2-1) = k.
\]
Thus one of the two nontrivial infinite near-square families stated in \cite[Theorem~2]{Nagamochi} is recovered without the disputed scoring assertion. The present argument does not establish or disprove Nagamochi's full rectangle bound, the identity
\[
    s(k^2-2) = k,
\]
or the stronger general lower bound stated in \cite[Theorem~2]{Nagamochi}.

This leaves a natural geometric question. The strip measure charges only one pair of opposite sides of the container and has total mass $ab-\Delta(a)$; after rotation it gives the corresponding correction in the other direction. Can the two boundary corrections be combined without creating a square of mass at most $1$? For a square container $[0,k]^2$, such a construction would need total mass $k^2-2$ rather than the $k^2-1$ supplied by the present strip measure. A successful construction of this kind could provide a new route toward the unresolved part of Nagamochi's claimed bound.

More broadly, the issue illustrates the difficulty of rigorous lower bounds in square packing. Numerical and geometric constructions can often suggest very efficient arrangements, whereas an optimality proof must control every possible translation and orientation. Weighted measures and unavoidable-set arguments remain useful precisely because they turn this global packing problem into local geometric inequalities, but the counterexample here shows that the incidence conditions underlying those inequalities must be tracked carefully.

\paragraph{Use of AI tools.} AI tools were used during exploratory work, for symbolic and numerical checks, and for language and typesetting assistance. All mathematical arguments and calculations appearing in the article were independently checked by the author.

\appendix
\crefalias{section}{appendix}

\section{Calculations for the counterexample}\label{app:counterexample-calculations}

We verify the contact-square score \eqref{eq:contact-score} used in \Cref{sec:counterexample}. Throughout this appendix, $a > 3$, $b > 2$, and $t$ satisfies \eqref{eq:counterexample-parameters}. The square $K_t$ has vertices \eqref{eq:contact-square}, with edge directions $B-A = C-D = (t,1)$ and $D-A = C-B = (-1,t)$.

\paragraph{Weighted point contributions.}

Parametrize $AB$ by $A+s(t,1)$, where $0 \leq s \leq 1$. At height $y = 0.9$, we have $s = 9/10$, giving the right endpoint
\[
    \left(2-\frac{9}{10}t\right)+\frac{9}{10}t = 2.
\]
Similarly, on $DC$, parametrized by $D+s(t,1)$, the same height gives $s = 9/10-t$ and the left endpoint
\[
    \left(1-\frac{9}{10}t\right)+t\left(\frac{9}{10}-t\right) = 1-t^2.
\]
Both parameter values lie strictly between $0$ and $1$, so the chord is
\begin{equation}\label{eq:appendix-point-row}
    K_t\cap\{y = 0.9\} = [1-t^2,2]\times\{0.9\}.
\end{equation}
Thus $(1,0.9)$ lies in $K_t^\circ$, and $(2,0.9)$ lies on $AB$. Moreover,
\[
    \min_{(x,y) \in K_t}x = 1-\frac{9}{10}t \geq \frac{491}{500} > 0.9,
\]
so the point column $x = 0.9$ is disjoint from $K_t$. Together with \eqref{eq:contact-bounds}, this excludes all remaining weighted points. Hence
\begin{equation}\label{eq:appendix-point-counts}
    \#(K_t\cap Q_0) = 1,\qquad \#(K_t\cap P_0) = 1,
\end{equation}
and the total point contribution is $9/20+1/2$.

\paragraph{Intersection with the horizontal weighted line.}

The vertex $B$ lies on $y = 1$. On the opposite edge $DC$, the parametrization $D+s(t,1)$ reaches this height at $s = 1-t$. Its abscissa is
\[
    \left(1-\frac{9}{10}t\right)+t(1-t) = 1+\frac{1}{10}t-t^2.
\]
The full chord lies within $L_1$, giving
\begin{equation}\label{eq:appendix-horizontal}
    K_t\cap L_1 = \left[1+\frac{1}{10}t-t^2,2+\frac{1}{10}t\right]\times\{1\}.
\end{equation}
Subtracting the endpoints yields
\begin{equation}\label{eq:appendix-horizontal-length}
    \length(K_t\cap L_1) = 1+t^2.
\end{equation}
The left endpoint lies strictly to the right of $x = 1$, since $t/10-t^2 > 0$. In particular, $(1,1)$ is outside $K_t$.

\paragraph{Intersection with the vertical weighted line.}

The upper intersection with $x = 1$ lies on $DC$. Its first coordinate equals $1$ when
\[
    1-\frac{9}{10}t+st = 1,\qquad s = \frac{9}{10},
\]
giving height $0.9+t$. The lower intersection lies on $AD$, parametrized by $A+s(-1,t)$. Its first coordinate equals $1$ when
\[
    2-\frac{9}{10}t-s = 1,\qquad s = 1-\frac{9}{10}t,
\]
giving height $t(1-9t/10) < t \leq 1/50 < 0.9$. Since $L_3$ starts at height $0.9$ and extends beyond $0.9+t$, it follows that
\begin{equation}\label{eq:appendix-vertical}
    K_t\cap L_3 = \{1\}\times[0.9,0.9+t],\qquad \length(K_t\cap L_3) = t.
\end{equation}

\paragraph{Area inside the central region.}

The upper vertex $C$ lies to the right of $x = 1$, as does the full chord \eqref{eq:appendix-horizontal}. Together with \eqref{eq:contact-bounds}, this shows that the portion above $y = 1$ is entirely inside $R^* = [1,a-1]\times[1,b-1]$. It is a triangle with base $1+t^2$ and height $t$, so
\begin{equation}\label{eq:appendix-area}
    \area(K_t\cap R^*) = \frac{t}{2}(1+t^2).
\end{equation}

\paragraph{The contact-square score.}

Substituting \eqref{eq:appendix-point-counts}, \eqref{eq:appendix-horizontal-length}, \eqref{eq:appendix-vertical}, and \eqref{eq:appendix-area} into \eqref{eq:local-score} gives
\[
    \sigma(K_t) = \frac{9}{20}+\frac{1}{2}+\frac{1}{2}(1+t^2)+\frac{t}{2}+\frac{t(1+t^2)}{2} = \frac{29}{20}+t+\frac{1}{2}t^2+\frac{1}{2}t^3.
\]
Subtracting the boundary point's mass $1/2$ yields \eqref{eq:contact-score}.

\end{document}